\documentclass[11pt]{article}

\usepackage{amsmath,amssymb,amsthm}
\usepackage{mathrsfs}
\usepackage{hyperref}
\usepackage{tikz-cd}
\usepackage{graphicx} 
\theoremstyle{definition}
\newtheorem{definition}{Definition}[section]

\theoremstyle{plain}
\newtheorem{theorem}[definition]{Theorem}
\newtheorem{proposition}[definition]{Proposition}
\newtheorem{lemma}[definition]{Lemma}
\newtheorem{corollary}[definition]{Corollary}

\newtheorem{remark}[definition]{Remark}

\numberwithin{equation}{section}

\title{Hopf Images of Hopf algebra Coactions }

\author{Arnab Bhattacharjee}
\date{}

\begin{document}

\maketitle

\begingroup
\renewcommand{\thefootnote}{}
\footnotetext{%
\emph{2020 Mathematics Subject Classification.}
Primary: 16T05; Secondary: 16T15.
\newline
\emph{Key words and phrases.}
Hopf images, Inner--faithful coactions, Quantum groups, Nichols algebras.
\newline
The author acknowledges support from HORIZON-MSCA-2021-SE-01-CaLIGOLA.
}
\endgroup

\begin{abstract}
We introduce the Hopf image of a right coaction of a Hopf algebra on an
algebra as the smallest Hopf subalgebra through which the coaction
factors. We establish its basic properties and show that the induced
coaction on the Hopf image is inner-faithful. For coactions on Nichols
algebras arising from Yetter--Drinfeld modules, we prove that the Hopf
image is the smallest Hopf subalgebra detected by the underlying
coaction on the Yetter--Drinfeld module. We give explicit examples of proper non-trivial Hopf images
arising from bosonizations.
\end{abstract}

\tableofcontents

\section{Introduction}

Effectiveness is a fundamental aspect of symmetry. In the classical
setting, an action of a group on an object may have a non-trivial kernel,
and the effective symmetry is obtained by removing the part of the group
which acts trivially. For a quantum symmetry described by a Hopf algebra
coaction, the analogous question is to determine which part of the
ambient Hopf algebra is actually detected by the coaction.

Let $A$ be an algebra and let
\[
\delta:A\longrightarrow A\otimes H
\]
be a right coaction of a Hopf algebra $H$. We ask whether there exists a
canonical smallest Hopf subalgebra $H_\delta\subseteq H$ through which
$\delta$ factors, and whether the resulting coaction of $H_\delta$ is
effective in the appropriate quantum sense. This leads naturally to the
notions of \emph{Hopf image} and \emph{inner-faithful coaction}.

The notion of Hopf image was introduced by Banica and Bichon
\cite{banica2010hopf} in the context of representations of Hopf
algebras. Their construction associates to a representation of a minimal
Hopf quotient encoding the quantum symmetry detected by the
representation. For coactions, the natural counterpart is a Hopf
subalgebra of the ambient Hopf algebra. In this paper we develop this
construction and define the Hopf image $H_\delta$ of a coaction to be
the smallest Hopf subalgebra of $H$ satisfying
\[
\delta(A)\subseteq A\otimes H_\delta.
\]

Our first goal is to establish the basic theory of Hopf images of
coactions. We prove that the Hopf image is characterized by a universal
property and that the induced coaction
\[
\delta_{\mathrm{im}}:A\longrightarrow A\otimes H_\delta
\]
is inner-faithful. Thus every coaction admits a canonical factorization
through an inner-faithful coaction, and $H_\delta$ provides the effective
quantum symmetry detected by $\delta$. We further describe the Hopf
image in terms of the coefficients of the coaction and establish its
invariance under isomorphisms of comodule algebras. We also study its
behaviour under natural constructions of coactions, in particular
tensor products.

Our second goal is to produce explicit non-trivial examples of proper
Hopf images. For a right-right Yetter--Drinfeld module
$V\in\mathcal{YD}^{H}_{H}$, the canonical coaction on its Nichols
algebra $\mathcal{B}(V)$ provides a natural class of examples. We prove
that the Hopf image of this coaction is precisely the smallest Hopf
subalgebra $K_V\subseteq H$ through which the coaction on $V$ factors:
\[
H_{\rho_{\mathcal{B}(V)}}=K_V.
\]
This reduces the construction of proper Hopf images on Nichols algebras
to the determination of the coefficient Hopf subalgebras of
Yetter--Drinfeld modules.

We conclude with explicit examples, including a bosonization
\[
H=\mathcal{B}(W)\#K
\]
for which a suitable Yetter--Drinfeld module $V$ gives
\[
\mathbb{C}1\subsetneq
H_{\rho_{\mathcal{B}(V)}}\subsetneq H.
\]
In particular, this shows that a genuinely non-commutative and
non-cocommutative ambient Hopf algebra can co-act on a Nichols algebra
through a strictly smaller, non-trivial Hopf algebra.

\medskip
\noindent
\textbf{Structure of the paper.}
Section~2 recalls the necessary
background on Hopf algebras and coactions of Hopf algebras. Section~3 develops the theory of
Hopf images and inner-faithful coactions, including their universal
property and basic structural properties including tensor products of Hopf images. Section~4 is devoted to
examples of Hopf images and inner faithful coactions arising from quantized coordinate algebras, Nichols algebras and bosonizations.
\medskip
\noindent

\section{Preliminaries}

Throughout the paper, $H$ denotes a Hopf algebra with coproduct $\Delta$,
counit $\varepsilon$, and a bijective antipode $S$.
An algebra $A$ is always assumed to be associative and unital. In this section we follow literature from text books \cite{ brzezinski2003corings,majid2002quantum}.

\begin{definition}
A \emph{right coaction} of a Hopf algebra $H$ on an algebra $A$ is an algebra morphism
\[
\delta : A \longrightarrow A \otimes H
\]
satisfying
\[
(\delta \otimes \mathrm{id}_{H})\circ \delta
= (\mathrm{id}_{A} \otimes \Delta)\circ \delta,
\qquad
(\mathrm{id}_{A} \otimes \varepsilon)\circ \delta = \mathrm{id}_{A}.
\]
\end{definition}

We use Sweedler notation $\delta(a)= a_{(0)}\otimes a_{(1)}$.
\section{Hopf image of a coaction}
The notion of Hopf image, introduced by Banica and Bichon
\cite{banica2010hopf} for representations of Hopf algebras, provides a
canonical way to isolate the effective quantum symmetry of an action.
In the present section, we develop an analogous construction for
coactions of Hopf algebras on algebras. In the coaction setting, the
Hopf image is realized as a Hopf subalgebra of the ambient Hopf algebra:
we define it as a universal factorization object encoding the effective
quantum symmetry detected by the coaction.

\subsection{Factorizations of coactions}

\begin{definition}\label{def:facto}
Let $\delta:A\to A\otimes H$ be a right coaction of a Hopf algebra $H$ on an algebra $A$. A \emph{factorization} of $\delta$ is a triple $(L,\iota_L,\delta_L)$ where
$L\subseteq H$ is a Hopf subalgebra with inclusion $\iota_L:L\hookrightarrow H$
and $\delta_L:A\to A\otimes L$ is a right $L$-coaction such that
\[
\delta=(\mathrm{id}\otimes\iota_L)\circ\delta_L.
\]
\end{definition}

\subsection{Hopf image of a coaction}

\begin{definition}\label{def:Hopfimage}
Let $\delta:A\to A\otimes H$ be a right coaction of a Hopf algebra $H$ on an algebra $A$. The \emph{Hopf image} of $\delta$, denoted $H_\delta$, is the initial object in
the category of factorizations of $\delta$.
\end{definition}

\begin{theorem}\label{thm:Hopfimage}
Let $H$ be a Hopf algebra, $A$ an algebra, and
\[
\delta : A \longrightarrow A \otimes H
\]
a right coaction. Then there exists a Hopf subalgebra
\[
H_\delta \subseteq H
\]
and a right $H_\delta$-coaction
\[
\delta_{\mathrm{im}} : A \longrightarrow A \otimes H_\delta
\]
such that:
\begin{enumerate}
\item $\delta = (\mathrm{id}_A \otimes \iota_{H_{\delta}})\circ \delta_{\mathrm{im}}$, where
      $\iota_{H_{\delta}} : H_\delta \hookrightarrow H$ is the inclusion;
\item $(H_\delta,\iota_{H_{\delta}},\delta_{\mathrm{im}})$ is the initial object in the category of factorizations of $\delta$.
\end{enumerate}
Equivalently, $H_\delta$ is the smallest Hopf subalgebra $L \subseteq H$ such that
\[
\delta(A) \subseteq A \otimes L .
\]
\end{theorem}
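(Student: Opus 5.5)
We will construct $H_\delta$ as the intersection of all Hopf subalgebras that carry the coaction. Let $\mathcal{F}$ be the set of Hopf subalgebras $L\subseteq H$ with $\delta(A)\subseteq A\otimes L$; it is nonempty since $H\in\mathcal{F}$. Put
\[
H_\delta:=\bigcap_{L\in\mathcal{F}} L .
\]
An arbitrary intersection of Hopf subalgebras is again a Hopf subalgebra. It is clearly a unital subalgebra and is stable under $S$. For the coproduct, note that $\bigcap_L (L\otimes L)=\bigl(\bigcap_L L\bigr)\otimes\bigl(\bigcap_L L\bigr)$ inside $H\otimes H$. To see this, choose a basis of $H_\delta$, extend it to bases adapted to each $L$, and use that for subspaces $U_i,V_i$ one has $\bigcap_i (U_i\otimes V_i)=(\bigcap_i U_i)\otimes(\bigcap_i V_i)$ over a field. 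Hence $\Delta(H_\delta)\subseteq H_\delta\otimes H_\delta$.

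The key step, and the one I expect to be the main obstacle, is showing that $\delta(A)\subseteq A\otimes H_\delta$. This again reduces to the linear-algebra identity
\[
\bigcap_{L\in\mathcal{F}} (A\otimes L)=A\otimes \bigcap_{L\in\mathcal{F}}L .
\]
To prove it, fix a basis $\{a_i\}$ of $A$. Every $x\in A\otimes H$ can be written uniquely as $\sum_i a_i\otimes h_i$, and $x\in A\otimes L$ if and only if every $h_i\in L$. So if $x$ lies in every $A\otimes L$, each coefficient $h_i$ lies in every $L$, and therefore in $H_\delta$. This is where we use that we work over a field, so that tensor products are flat and the coefficients are unique.

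Applying this to $x=\delta(a)$, we may define $\delta_{\mathrm{im}}$ as the corestriction of $\delta$ to $A\otimes H_\delta$, and item (1) then holds by construction. It remains to check that $\delta_{\mathrm{im}}$ is an $H_\delta$-coaction. It is an algebra map because $A\otimes H_\delta\to A\otimes H$ is an injective algebra map. Coassociativity and counitality follow by composing with the injective map $\mathrm{id}\otimes\iota\otimes\iota$, respectively $\mathrm{id}\otimes\iota$: the required identities hold after this injection because they hold for $\delta$, and $\Delta_{H_\delta}$ and $\varepsilon_{H_\delta}$ are restrictions of $\Delta$ and $\varepsilon$.

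For item (2), take the morphisms of factorizations $(L,\iota_L,\delta_L)\to(L',\iota_{L'},\delta_{L'})$ to be Hopf algebra maps $f:L\to L'$ with $\iota_{L'}f=\iota_L$ and $(\mathrm{id}\otimes f)\delta_L=\delta_{L'}$. Such an $f$ is unique if it exists, and it exists exactly when $L\subseteq L'$. Indeed, if $L\subseteq L'$ then the inclusion works, and the coaction condition holds by injectivity of $\mathrm{id}\otimes\iota_{L'}$. Given any factorization $(L,\iota_L,\delta_L)$, we have $\delta(A)=(\mathrm{id}\otimes\iota_L)\delta_L(A)\subseteq A\otimes L$, so $L\in\mathcal{F}$ and hence $H_\delta\subseteq L$. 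This yields a unique morphism from $(H_\delta,\iota_{H_\delta},\delta_{\mathrm{im}})$ to $(L,\iota_L,\delta_L)$, so it is initial.

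Finally, the "equivalently" clause is immediate from the same reasoning. $H_\delta$ belongs to $\mathcal{F}$ and is contained in every member of $\mathcal{F}$, so it is the smallest Hopf subalgebra $L$ with $\delta(A)\subseteq A\otimes L$. Conversely, any initial factorization is isomorphic to this one, and since the morphisms are inclusions it must be equal to it.
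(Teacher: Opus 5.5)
Your proposal is correct and follows the paper's proof: both define $H_\delta$ as the intersection of all Hopf subalgebras $L$ with $\delta(A)\subseteq A\otimes L$, take $\delta_{\mathrm{im}}$ to be the corestriction of $\delta$, and get initiality because the inclusion $H_\delta\subseteq L$ is the unique morphism to any factorization. You also supply details the paper leaves implicit, namely the coefficient argument for $\bigcap_L(A\otimes L)=A\otimes\bigcap_L L$ and the check that $\delta_{\mathrm{im}}$ satisfies the coaction axioms; the remark about bases "adapted to each $L$" is vague, but the same coefficient argument applied in each tensor factor gives $\bigcap_L(L\otimes L)\subseteq (H_\delta\otimes H)\cap(H\otimes H_\delta)=H_\delta\otimes H_\delta$, which is exactly what you need for $\Delta(H_\delta)\subseteq H_\delta\otimes H_\delta$.
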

\begin{proof}
Let $\mathcal{S}$ denote the collection of all Hopf subalgebras
$L \subseteq H$ satisfying $\delta(A) \subseteq A \otimes L$.
Define
\[
H_\delta := \bigcap_{L \in \mathcal{S}} L .
\]

Since intersections of subalgebras (resp.\ subcoalgebras) are again
subalgebras (resp.\ subcoalgebras), and the antipode preserves
intersections, $H_\delta$ is a Hopf subalgebra of $H$.

By construction, $\delta(A) \subseteq A \otimes L$ for every
$L \in \mathcal{S}$, hence
\[
\delta(A) \subseteq A \otimes \bigcap_{L \in \mathcal{S}} L
= A \otimes H_\delta .
\]
Thus $\delta$ uniquely factors as
\[
\delta = (\mathrm{id}_A \otimes \iota_{H_{\delta}})\circ \delta_{\mathrm{im}},
\]
where $\delta_{\mathrm{im}}$ is the restriction of $\delta$ to
$A \otimes H_\delta$.

Now let $(L,\iota_{L},\delta_L)$ be any factorization of $\delta$.
By definition, $L \in \mathcal{S}$, hence $H_\delta \subseteq L$.
The inclusion $f : H_\delta \hookrightarrow L$ is the unique Hopf algebra
morphism satisfying
\[
\delta_L = (\mathrm{id}_A \otimes f)\circ \delta_{\mathrm{im}}
\quad\text{and}\quad
(\mathrm{id}_{A}\otimes \iota_{L})\circ (\mathrm{id}_{A}\otimes f)= \mathrm{id}_{A}\otimes \iota_{H_{\delta}}
\]
Therefore $(H_\delta,\iota_{H_{\delta}},\delta_{\mathrm{im}})$ is initial among
all factorizations of $\delta$.
\end{proof}

\begin{proposition}\label{prop:generator}
Let $H$ be a Hopf algebra, $A$ an algebra, and
\[
\delta : A \longrightarrow A \otimes H
\]
a right $H$-coaction. Then the Hopf image $H_\delta$ of $\delta$ coincides with
the Hopf subalgebra of $H$ generated by the subspace
\[
\mathcal{C}_\delta
:=
\operatorname{span}_{\mathbb{C}}
\left\{
(\omega\otimes\operatorname{id})\delta(a)
\mid
a\in A,\ \omega\in A^*
\right\}.
\]
\end{proposition}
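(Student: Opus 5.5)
Let $K$ denote the Hopf subalgebra of $H$ generated by $\mathcal{C}_\delta$, i.e.\ the smallest Hopf subalgebra containing $\mathcal{C}_\delta$. It exists as the intersection of all Hopf subalgebras containing $\mathcal{C}_\delta$, by the same argument as in the proof of Theorem~\ref{thm:Hopfimage}. By the characterization in Theorem~\ref{thm:Hopfimage}, $H_\delta$ is the smallest Hopf subalgebra $L$ with $\delta(A)\subseteq A\otimes L$. I will therefore prove the two inclusions $H_\delta\subseteq K$ and $K\subseteq H_\delta$ separately.

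\emph{Step 1: $K\subseteq H_\delta$.} By minimality of $K$ it suffices to show $\mathcal{C}_\delta\subseteq H_\delta$. Given $a\in A$, Theorem~\ref{thm:Hopfimage} gives $\delta(a)\in A\otimes H_\delta$. Write $\delta(a)=\sum_i a_i\otimes h_i$ with $h_i\in H_\delta$. Then for any $\omega\in A^*$,
\[
(\omega\otimes\mathrm{id})\delta(a)=\sum_i\omega(a_i)h_i\in H_\delta .
\]

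\emph{Step 2: $H_\delta\subseteq K$.} By minimality of $H_\delta$ it suffices to show $\delta(A)\subseteq A\otimes K$. Here I use the standard linear-algebra fact that for a subspace $W\subseteq H$ and any vector space $A$,
\[
A\otimes W=\{x\in A\otimes H : (\omega\otimes\mathrm{id})(x)\in W \text{ for all } \omega\in A^*\}.
\]
To prove it, take $x\in A\otimes H$ and write $x=\sum_{i=1}^n a_i\otimes h_i$ with the $a_i$ linearly independent. Choose $\omega_j\in A^*$ with $\omega_j(a_i)=\delta_{ij}$, which is possible since the $a_i$ span a finite-dimensional subspace. Applying $(\omega_j\otimes\mathrm{id})$ gives $h_j\in W$, so $x\in A\otimes W$. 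For $x=\delta(a)$ the coefficients $(\omega\otimes\mathrm{id})\delta(a)$ lie in $\mathcal{C}_\delta\subseteq K$, so $\delta(a)\in A\otimes K$.

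Combining the two steps gives $H_\delta=K$. The only mildly delicate point is the coefficient lemma in Step 2: one must write $\delta(a)$ as a finite sum with linearly independent left tensor factors and extend the dual functionals to all of $A$, which uses a choice of complement (Hahn--Banach is not needed over a field). Everything else is formal from Theorem~\ref{thm:Hopfimage}.

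Optionally, I would also remark that $\mathcal{C}_\delta$ is already a subcoalgebra. This follows from coassociativity: $\Delta\bigl((\omega\otimes\mathrm{id})\delta(a)\bigr)=(\omega\otimes\mathrm{id}\otimes\mathrm{id})(\delta\otimes\mathrm{id})\delta(a)$, and applying the lemma again shows this lies in $\mathcal{C}_\delta\otimes\mathcal{C}_\delta$. Consequently $K$ is simply the subalgebra generated by $\bigcup_{n\ge0}S^n(\mathcal{C}_\delta)$. This is not needed for the proof, however.
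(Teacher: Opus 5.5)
Your proposal is correct and follows essentially the same route as the paper. Both arguments prove the two inclusions: $K\subseteq H_\delta$ by applying functionals $(\omega\otimes\mathrm{id})$, and $H_\delta\subseteq K$ by minimality once $\delta(A)\subseteq A\otimes K$. Your coefficient lemma (choose linearly independent left legs and dual functionals) makes explicit a step the paper only asserts in Sweedler notation, namely that the right legs $a_{(1)}$ can be taken to lie in $\mathcal{C}_\delta$.
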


\begin{proof}
Let $K \subseteq H$ be the Hopf subalgebra generated by the subspace
$\mathcal{C}_\delta$.
For each $a \in A$, write $\delta(a)=\sum a_{(0)}\otimes a_{(1)}$.
Then $a_{(1)}\in\mathcal{C}_\delta\subseteq K$, hence
$\delta(a)\in A\otimes K$ for all $a\in A$.
Thus $\delta$ factors through $K$, and therefore $H_\delta\subseteq K$.

Since $\delta(A)\subseteq A\otimes H_\delta$, applying
$(\omega\otimes\mathrm{id})$ shows that
$\mathcal{C}_\delta\subseteq H_\delta$.
Because $H_\delta$ is a Hopf subalgebra, it contains the Hopf subalgebra
generated by $\mathcal{C}_\delta$, namely $K$.
Therefore $K\subseteq H_\delta$.
Combining the two inclusions yields $H_\delta=K$.
\end{proof}

\begin{lemma}\label{lem:minimality}
Let $H$ be a Hopf algebra, $A$ an algebra, and
\[
\delta : A \longrightarrow A \otimes H
\]
a right $H$-coaction. Let $(L,\iota_L,\delta_L)$ be any factorization of $\delta$, that is,
$L \subseteq H$ is a Hopf subalgebra with inclusion $\iota_L : L \hookrightarrow H$ and
$\delta_L : A \to A \otimes L$ is a right $L$-coaction such that
\[
\delta = (\mathrm{id}_A \otimes \iota_L)\circ \delta_L .
\]
Then the Hopf image $H_\delta$ of $\delta$ satisfies
\[
H_\delta \subseteq L .
\]
\end{lemma}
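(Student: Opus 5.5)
The plan is to reduce the lemma directly to the intersection description of $H_\delta$ given in the proof of Theorem~\ref{thm:Hopfimage}. The only input needed is that a factorization $(L,\iota_L,\delta_L)$ forces $L$ to belong to the family $\mathcal{S}$ of Hopf subalgebras $L\subseteq H$ with $\delta(A)\subseteq A\otimes L$.

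First, for $a\in A$ I would write $\delta_L(a)=\sum_i a_i\otimes \ell_i$ with $\ell_i\in L$. The identity $\delta=(\mathrm{id}_A\otimes\iota_L)\circ\delta_L$ then gives $\delta(a)=\sum_i a_i\otimes \iota_L(\ell_i)$. This element lies in the image of $A\otimes L$ in $A\otimes H$. Since we are over a field, the map $\mathrm{id}_A\otimes\iota_L$ is injective, so this image is precisely the subspace $A\otimes L\subseteq A\otimes H$. Hence $\delta(A)\subseteq A\otimes L$, that is, $L\in\mathcal{S}$.

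Second, $H_\delta$ was defined as $\bigcap_{L'\in\mathcal{S}}L'$, so $L\in\mathcal{S}$ immediately gives $H_\delta\subseteq L$. Alternatively, I could use Proposition~\ref{prop:generator}. For any $\omega\in A^*$ we have $(\omega\otimes\mathrm{id})\delta(a)=\sum_i\omega(a_i)\ell_i\in L$, so $\mathcal{C}_\delta\subseteq L$. Since $L$ is a Hopf subalgebra, it contains the Hopf subalgebra generated by $\mathcal{C}_\delta$, which is $H_\delta$. A third route is the initial-object property in Theorem~\ref{thm:Hopfimage}: it yields a morphism $H_\delta\to L$ compatible with the inclusions into $H$, whose image is therefore $H_\delta$ sitting inside $L$.

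There is no real obstacle. The one point needing care is the identification of $A\otimes L$ with a subspace of $A\otimes H$, which relies on injectivity of $\mathrm{id}_A\otimes\iota_L$, i.e.\ flatness over the ground field. I would state this explicitly so that the inclusion $\delta(A)\subseteq A\otimes L$ is meaningful.
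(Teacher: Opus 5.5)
Your proposal is correct and follows the paper's own argument. A factorization forces $\delta(A)\subseteq A\otimes L$, so $L$ lies in the family $\mathcal{S}$ whose intersection defines $H_\delta$, and hence $H_\delta\subseteq L$. Your remark that $\mathrm{id}_A\otimes\iota_L$ is injective over the field is a detail the paper leaves implicit, and your two alternative routes (via $\mathcal{C}_\delta$ and via the initial-object property) are also valid.
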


\begin{proof}
By definition, the Hopf image $H_\delta$ is the smallest Hopf subalgebra
$K \subseteq H$ such that $\delta(A) \subseteq A \otimes K$.
Since $(L,\iota_L,\delta_L)$ is a factorization of $\delta$, we have
$\delta(A) \subseteq A \otimes L$, hence $L$ is one of the Hopf subalgebras
appearing in the defining intersection for $H_\delta$.
Therefore $H_\delta \subseteq L$, as claimed.
\end{proof}

\subsection{Inner-faithful coactions}

\begin{definition}\label{def:innerfaithful}
Let $H$ be a Hopf algebra, $A$ an algebra, and
\[
\delta : A \longrightarrow A \otimes H
\]
a right $H$-coaction. Let $H_\delta \subseteq H$ denote the Hopf image of $\delta$.
The coaction $\delta$ is called \emph{inner faithful} if
\[
H_\delta = H .
\]
\end{definition}

\begin{proposition}\label{prop:innerfaithful}
Let $H$ be a Hopf algebra, $A$ an algebra, and
\[
\delta : A \longrightarrow A \otimes H
\]
a right $H$-coaction. Let $H_\delta \subseteq H$ denote the Hopf image of $\delta$.
Then the following statements are equivalent:
\begin{enumerate}
\item $\delta$ is inner faithful, i.e.\ $H_\delta = H$;
\item There exists no proper Hopf subalgebra $L \subsetneq H$ such that
      $\delta(A) \subseteq A \otimes L$;
\item For every factorization $(L,\iota_L,\delta_L)$ of $\delta$, the inclusion
      $\iota_L : L \hookrightarrow H$ is an isomorphism.
\end{enumerate}
\end{proposition}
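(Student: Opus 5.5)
I will prove the cycle of implications (1)$\Rightarrow$(2)$\Rightarrow$(3)$\Rightarrow$(1). Throughout I rely on Theorem~\ref{thm:Hopfimage}, which identifies $H_\delta$ with the smallest Hopf subalgebra $L\subseteq H$ satisfying $\delta(A)\subseteq A\otimes L$, and on Lemma~\ref{lem:minimality}, which gives $H_\delta\subseteq L$ for every factorization $(L,\iota_L,\delta_L)$.

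\emph{(1)$\Rightarrow$(2).} Assume $H_\delta=H$, and let $L\subseteq H$ be a Hopf subalgebra with $\delta(A)\subseteq A\otimes L$. Then $L$ lies in the family whose intersection defines $H_\delta$, so $H=H_\delta\subseteq L$. Hence $L=H$, and $L$ is not proper.

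\emph{(2)$\Rightarrow$(3).} Let $(L,\iota_L,\delta_L)$ be a factorization. Since $\delta=(\mathrm{id}_A\otimes\iota_L)\circ\delta_L$, we have
\[
\delta(A)\subseteq(\mathrm{id}_A\otimes\iota_L)(A\otimes L)=A\otimes L.
\]
This uses that $A\otimes L\to A\otimes H$ is injective, which holds because we work over a field. By (2) this forces $L=H$, so $\iota_L$ is the identity and in particular an isomorphism of Hopf algebras.

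\emph{(3)$\Rightarrow$(1).} Apply (3) to the factorization $(H_\delta,\iota_{H_\delta},\delta_{\mathrm{im}})$ provided by Theorem~\ref{thm:Hopfimage}. Then $\iota_{H_\delta}$ is bijective, so $H_\delta=H$.

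There is no serious obstacle. The only points to check are the following. In (2)$\Rightarrow$(3), being a factorization really amounts to the containment $\delta(A)\subseteq A\otimes L$, because the corestriction $\delta_L$ is uniquely determined by injectivity of $\mathrm{id}_A\otimes\iota_L$. Conversely, any such containment yields a genuine $L$-coaction, since $L$ is a subcoalgebra and the coassociativity and counit identities can be checked inside $A\otimes H$.
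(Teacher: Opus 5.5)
Your proof is correct and follows the same cycle $(1)\Rightarrow(2)\Rightarrow(3)\Rightarrow(1)$ as the paper: minimality of $H_\delta$ gives the first step, the containment $\delta(A)\subseteq A\otimes L$ gives the second, and applying (3) to the factorization $(H_\delta,\iota_{H_\delta},\delta_{\mathrm{im}})$ gives the third. Your added remarks make explicit what the paper uses tacitly: injectivity of $\mathrm{id}_A\otimes\iota_L$ over a field, and the fact that a corestriction is a genuine $L$-coaction.
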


\begin{proof}
$(1) \Rightarrow (2)$.
If $\delta(A) \subseteq A \otimes L$ for some Hopf subalgebra $L \subseteq H$,
then $(L,\iota_L,\delta_L)$ is a factorization of $\delta$.
By minimality of the Hopf image, $H_\delta \subseteq L$.
If $L$ were proper, this would contradict $H_\delta = H$.
Hence no such proper Hopf subalgebra exists.

$(2) \Rightarrow (3)$.
Let $(L,\iota_L,\delta_L)$ be any factorization of $\delta$.
Then $\delta(A) \subseteq A \otimes L$, so by assumption $L = H$.
Therefore $\iota_L$ is an isomorphism.

$(3) \Rightarrow (1)$.
By definition, $(H_\delta,\iota_{H_{\delta}},\delta_{\mathrm{im}})$ is a factorization
of $\delta$. Applying $(3)$ to this factorization shows that
$\iota_{H_{\delta}} : H_\delta \hookrightarrow H$ is an isomorphism, hence
$H_\delta = H$.
\end{proof}

\begin{proposition}\label{prop:universitality}
Let $H$ be a Hopf algebra, $A$ an algebra, and
\[
\delta : A \longrightarrow A \otimes H
\]
a right $H$-coaction. Let $(H_\delta,\iota_{H_\delta},\delta_{\mathrm{im}})$ denote
the Hopf image of $\delta$, where
$\iota_{H_\delta} : H_\delta \hookrightarrow H$ is the inclusion and
$\delta_{\mathrm{im}} : A \to A \otimes H_\delta$ is the induced coaction. Then, the coaction $\delta_{\mathrm{im}}$ is inner faithful, and conversely, if $(L,\iota_L,\delta_L)$ is a factorization of $\delta$ such that
      the coaction $\delta_L : A \to A \otimes L$ is inner faithful, then
      $L \cong H_\delta$ as Hopf algebras.
\end{proposition}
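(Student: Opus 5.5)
The proof has two parts, and both reduce to the minimality statement of Theorem~\ref{thm:Hopfimage} (equivalently Lemma~\ref{lem:minimality}), applied in two different ambient Hopf algebras.

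\emph{Inner faithfulness of $\delta_{\mathrm{im}}$.} We use criterion (2) of Proposition~\ref{prop:innerfaithful} with ambient Hopf algebra $H_\delta$. Suppose $L\subseteq H_\delta$ is a Hopf subalgebra with $\delta_{\mathrm{im}}(A)\subseteq A\otimes L$.
\begin{itemize}
\item A Hopf subalgebra of a Hopf subalgebra of $H$ is a Hopf subalgebra of $H$, since the structure maps of $H_\delta$ are the restrictions of those of $H$. So $L\subseteq H$ is a Hopf subalgebra.
\item Since $\delta=(\mathrm{id}_A\otimes\iota_{H_\delta})\circ\delta_{\mathrm{im}}$, we get $\delta(A)\subseteq A\otimes L$, viewing $A\otimes L\subseteq A\otimes H_\delta\subseteq A\otimes H$. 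Over a field, tensoring with $A$ preserves injectivity, so these identifications are legitimate.
\item Restricting $\delta$ gives a factorization $(L,\iota_L,\delta_L)$ of $\delta$. By Lemma~\ref{lem:minimality}, $H_\delta\subseteq L$, hence $L=H_\delta$.
\end{itemize}
Thus no proper Hopf subalgebra of $H_\delta$ carries $\delta_{\mathrm{im}}$, so $\delta_{\mathrm{im}}$ is inner faithful.

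\emph{Converse.} Let $(L,\iota_L,\delta_L)$ be a factorization of $\delta$ with $\delta_L$ inner faithful.
\begin{itemize}
\item By Lemma~\ref{lem:minimality}, $H_\delta\subseteq L$, and $H_\delta$ is a Hopf subalgebra of $L$.
\item Because $\iota_L$ is injective, $\delta(A)\subseteq A\otimes H_\delta$ together with $\delta=(\mathrm{id}\otimes\iota_L)\circ\delta_L$ gives $\delta_L(A)\subseteq A\otimes H_\delta$. To justify this, pick a complement $L=H_\delta\oplus M$ and note that the $A\otimes M$-component of $\delta_L(a)$ maps injectively into $A\otimes H$, where it must vanish.
\item Inner faithfulness of $\delta_L$ in the form (2) of Proposition~\ref{prop:innerfaithful} then forces $H_\delta=L$.
\end{itemize}
This is actually an equality of subalgebras of $H$, which is stronger than the isomorphism $L\cong H_\delta$ claimed.

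\emph{Main obstacle.} The only delicate point is this bookkeeping of subspaces of tensor products: identifying $A\otimes L$ and $A\otimes H_\delta$ inside $A\otimes H$, and checking that the intersection $(A\otimes L)\cap(A\otimes H_\delta)$ equals $A\otimes(L\cap H_\delta)$. Both hold over a field by a choice of bases adapted to the subspaces, and I expect the argument to go through cleanly once this is in place.
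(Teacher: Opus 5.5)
Your proposal is correct and follows the paper's own argument. Inner faithfulness of $\delta_{\mathrm{im}}$ comes from viewing any Hopf subalgebra of $H_\delta$ that carries $\delta_{\mathrm{im}}$ as a factorization of $\delta$, and the converse comes from sandwiching $H_\delta\subseteq L$ against the minimality encoded in inner faithfulness of $\delta_L$. Your complement argument for $\delta_L(A)\subseteq A\otimes H_\delta$ fills in a step the paper leaves implicit in its claim $L_\delta\subseteq H_\delta$, and you are right that the conclusion is in fact the equality $L=H_\delta$.
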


\begin{proof}
By definition, $(H_\delta,\iota_{H_\delta},\delta_{\mathrm{im}})$ is a factorization
of $\delta$ and $H_\delta$ is the smallest Hopf subalgebra of $H$ such that
$\delta(A) \subseteq A \otimes H_\delta$.
Suppose that $\delta_{\mathrm{im}}$ factors through a proper Hopf subalgebra
$K \subsetneq H_\delta$, i.e.\ $\delta_{\mathrm{im}}(A) \subseteq A \otimes K$.
Then $(K,\iota_K,\delta_K)$ would be a factorization of $\delta$, contradicting
the minimality of $H_\delta$. Hence $\delta_{\mathrm{im}}$ is inner faithful.

Conversely, let $(L,\iota_L,\delta_L)$ be a factorization of $\delta$ such that $\delta_L$
is inner faithful. By minimality of the Hopf image, we have
$H_\delta \subseteq L$.
On the other hand, since $\delta = (\mathrm{id}\otimes\iota_L)\circ\delta_L$,
the image of $\delta_L$ is contained in $A \otimes L$, and by definition of the
Hopf image we also have $L_\delta \subseteq H_\delta$ for the coaction $\delta_L$.
Inner faithfulness of $\delta_L$ implies $L = L_\delta$.
Therefore $L \cong H_\delta$ as Hopf algebras.
\end{proof}
\begin{remark}
The Hopf image thus provides the unique factorization of a coaction with
inner-faithful structure Hopf algebra. In particular, every coaction admits a
canonical reduction to an inner-faithful one by passing to its Hopf image.
\end{remark}
\begin{proposition}
Let $H$ be a Hopf algebra and
\[
\delta_A : A \longrightarrow A \otimes H
\]
a right $H$-coaction on an algebra $A$. Let $\theta : A \to B$ be an algebra
isomorphism, and define a right $H$-coaction on $B$ by
\[
\delta_B := (\theta \otimes \mathrm{id}) \circ \delta_A \circ \theta^{-1}
: B \longrightarrow B \otimes H .
\]
Then the Hopf images of $\delta_A$ and $\delta_B$ are isomorphic as Hopf
algebras. More precisely,
\[
H_{\delta_A} = H_{\delta_B} \subseteq H .
\]
\end{proposition}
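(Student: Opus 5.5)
The plan is to show that the two sets of Hopf subalgebras which define the Hopf images coincide. Following the proof of Theorem~\ref{thm:Hopfimage}, $H_{\delta_A}$ is the intersection of all Hopf subalgebras $L\subseteq H$ with $\delta_A(A)\subseteq A\otimes L$, and $H_{\delta_B}$ is defined in the same way using $B$ and $\delta_B$. It therefore suffices to prove that for every Hopf subalgebra $L\subseteq H$,
\[
\delta_A(A)\subseteq A\otimes L \iff \delta_B(B)\subseteq B\otimes L .
\]
Once this holds, the two families $\mathcal{S}$ are equal, so their intersections are equal. This gives the equality $H_{\delta_A}=H_{\delta_B}$ inside $H$, and hence the claimed isomorphism of Hopf algebras.

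First I would check, routinely, that $\delta_B$ is indeed a right coaction. It is an algebra morphism, being a composite of algebra morphisms. Coassociativity and counitality follow by conjugating the identities for $\delta_A$ by $\theta$, because $\theta\otimes\mathrm{id}$ commutes with $\mathrm{id}\otimes\Delta$ and with $\mathrm{id}\otimes\varepsilon$.

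Next comes the key step. Since $\theta^{-1}$ is bijective, we have $\delta_B(B)=(\theta\otimes\mathrm{id})(\delta_A(A))$. If $\delta_A(A)\subseteq A\otimes L$, then $\delta_B(B)\subseteq(\theta\otimes\mathrm{id})(A\otimes L)=B\otimes L$. For the converse, I would use $\delta_A=(\theta^{-1}\otimes\mathrm{id})\circ\delta_B\circ\theta$ and run the same argument with $\theta^{-1}$ in place of $\theta$.

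As an alternative route, one could use Proposition~\ref{prop:generator} instead. In that approach $\mathcal{C}_{\delta_B}=\mathcal{C}_{\delta_A}$, because
\[
(\omega\otimes\mathrm{id})\delta_B(\theta(a))=(\omega\circ\theta\otimes\mathrm{id})\delta_A(a),
\]
and $\omega\mapsto\omega\circ\theta$ is a bijection $B^*\to A^*$. The two Hopf images are then generated by the same subspace, so they are equal.

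I expect no real obstacle here. The only point needing care is the identity $(\theta\otimes\mathrm{id})(A\otimes L)=B\otimes L$, which holds because $\theta$ is a linear bijection and tensor products of subspaces over a field behave well.
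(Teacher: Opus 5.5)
Your proposal is correct, and your main route differs from the paper's. The paper argues only through the coefficient description of Proposition~\ref{prop:generator}. It writes $\delta_B(\theta(a))=\theta(a_{(0)})\otimes a_{(1)}$ and concludes that $\delta_A$ and $\delta_B$ have the same coefficients, so they generate the same Hopf subalgebra.

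That is your ``alternative route,'' and your version is the more careful one. The paper's wording treats the individual Sweedler legs $a_{(1)}$ as well-defined elements. You avoid this by using the span $\mathcal{C}_\delta$ of elements $(\omega\otimes\mathrm{id})\delta(a)$ and the fact that $\omega\mapsto\omega\circ\theta$ is a bijection $B^*\to A^*$. The paper's route is quick once Proposition~\ref{prop:generator} is available, and it shows the equality already at the level of generating subspaces, $\mathcal{C}_{\delta_A}=\mathcal{C}_{\delta_B}$.

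Your main route bypasses Proposition~\ref{prop:generator} and uses the intersection description from Theorem~\ref{thm:Hopfimage} directly. You show the two defining families $\mathcal{S}$ coincide via $(\theta\otimes\mathrm{id})(A\otimes L)=B\otimes L$. This is more elementary and proves a little more. Each factorization of $\delta_A$ through $L$ is transported, via $(\theta\otimes\mathrm{id}_L)\circ\delta_{A,L}\circ\theta^{-1}$, to a factorization of $\delta_B$ through the same $L$. So the whole category of factorizations is preserved, not just its initial object.

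You also check that $\delta_B$ is a coaction, which the paper takes for granted.
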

\begin{proof}
Recall that the Hopf image of a coaction is the smallest Hopf subalgebra of $H$
through which the coaction factors. Equivalently, it is the Hopf subalgebra
generated by the coefficients of the coaction.

Let $a \in A$ and write
\[
\delta_A(a) =  a_{(0)} \otimes a_{(1)} .
\]
For $b = \theta(a) \in B$, we have
\[
\delta_B(b)
= (\theta \otimes \mathrm{id})\delta_A(a)
=  \theta(a_{(0)}) \otimes a_{(1)} .
\]
Thus the coefficients appearing in $\delta_B(b)$ are exactly the same elements
$a_{(1)} \in H$ as those appearing in $\delta_A(a)$.

Since $\theta$ is an algebra isomorphism, every element of $B$ is of the form
$\theta(a)$ for some $a \in A$, and therefore the set of coefficients of
$\delta_B$ coincides with the set of coefficients of $\delta_A$.
Consequently, the Hopf subalgebra of $H$ generated by the coefficients of
$\delta_B$ is equal to the Hopf subalgebra generated by the coefficients of
$\delta_A$.

By the coefficient description of the Hopf image, it follows that
\[
H_{\delta_B} = H_{\delta_A},
\]
which proves the claim.
\end{proof}
\begin{remark}
The Hopf image of a coaction is therefore invariant under isomorphisms
of comodule algebras. In particular, the effective quantum symmetry
encoded by the Hopf image is an invariant of the coaction up to
isomorphism.
\end{remark}
The Hopf image also behaves naturally with respect to constructions on
coactions. We first consider tensor products of comodule algebras. Given
right coactions of $H_1$ on $A$ and $H_2$ on $B$, there is a natural
right coaction of $H_1\otimes H_2$ on $A\otimes B$. The following
proposition describes the relation between the Hopf image of this
tensor product coaction and the Hopf images of the original coactions.
\begin{proposition}
    Let $H_{1}, H_{2}$ be Hopf algebras and $A,B$ be right $H_1$ and $H_2$ comodule algebras respectively. Let $\delta_{1}: A\rightarrow A\otimes H_1$ and $\delta_{2}: B\rightarrow B\otimes H_{2}$ be right coactions respectively. Then
    \begin{enumerate}
        \item $\delta_{3}:= (\mathrm{id}_{A}\otimes \tau\otimes \mathrm{id}_{H_{2}})\circ (\delta_{1}\otimes \delta_{2})$ is a right coaction of $H_{1}\otimes H_{2}$ on $A\otimes B$, where $\tau: H_{1}\otimes B\rightarrow B\otimes H_{1}$ is a flip map;

        \item There exists an injective Hopf algebra map from $(H_{1}\otimes H_{2})_{\delta_{3}}$ to $H_{1,\delta_{1}}\otimes H_{2, \delta_{2}}$.
    \end{enumerate}
\end{proposition}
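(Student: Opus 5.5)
Part (1) is a direct verification. The plan is to check that $\delta_3$ is an algebra map, that it is coassociative for $\Delta_{H_1\otimes H_2}=(\mathrm{id}\otimes\tau\otimes\mathrm{id})\circ(\Delta_1\otimes\Delta_2)$, and that it is counital. In Sweedler notation,
\[
\delta_3(a\otimes b)=a_{(0)}\otimes b_{(0)}\otimes a_{(1)}\otimes b_{(1)}.
\]
Multiplicativity follows because $\delta_1,\delta_2$ are algebra maps and the multiplication on $A\otimes B$ and on $H_1\otimes H_2$ is componentwise. Coassociativity follows by applying $(\delta_3\otimes\mathrm{id})$ and $(\mathrm{id}\otimes\Delta_{H_1\otimes H_2})$ to $a\otimes b$. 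Both sides equal
\[
a_{(0)}\otimes b_{(0)}\otimes a_{(1)}\otimes b_{(1)}\otimes a_{(2)}\otimes b_{(2)},
\]
using coassociativity of $\delta_1$ and $\delta_2$ separately. Counitality uses $\varepsilon_{H_1\otimes H_2}=\varepsilon_1\otimes\varepsilon_2$.

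For (2), I will use the minimality characterization in Theorem~\ref{thm:Hopfimage} rather than constructing a map by hand. The steps are as follows.

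First, $H_{1,\delta_1}\otimes H_{2,\delta_2}$ is a Hopf subalgebra of $H_1\otimes H_2$. It is closed under the tensor product multiplication, the comultiplication $\Delta_{H_1\otimes H_2}$ and the antipode $S_1\otimes S_2$, since each factor is. Over a field, the map $H_{1,\delta_1}\otimes H_{2,\delta_2}\to H_1\otimes H_2$ is injective, so this really is a subspace.

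Second, $\delta_3(A\otimes B)\subseteq A\otimes B\otimes H_{1,\delta_1}\otimes H_{2,\delta_2}$. Indeed $\delta_1(a)\in A\otimes H_{1,\delta_1}$ and $\delta_2(b)\in B\otimes H_{2,\delta_2}$, and the flip preserves these subspaces. Linearity then extends this from elementary tensors to all of $A\otimes B$.

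Third, Theorem~\ref{thm:Hopfimage}, or equivalently Lemma~\ref{lem:minimality} applied to this factorization, gives
\[
(H_1\otimes H_2)_{\delta_3}\subseteq H_{1,\delta_1}\otimes H_{2,\delta_2}.
\]
The inclusion map is the required injective Hopf algebra map. It is a Hopf map because both sides carry the structure restricted from $H_1\otimes H_2$.

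I expect the only delicate point to be the identification of $H_{1,\delta_1}\otimes H_{2,\delta_2}$ with a subspace of $H_1\otimes H_2$. This needs flatness of vector spaces over the ground field $\mathbb{C}$, together with the check that the restricted structures agree. Both are routine.

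As an optional sharpening, I could use Proposition~\ref{prop:generator}. The coefficients $(\omega\otimes\mathrm{id})\delta_3(a\otimes 1)$ give $\mathcal{C}_{\delta_1}\otimes 1$, and those of $1\otimes b$ give $1\otimes\mathcal{C}_{\delta_2}$. This suggests the inclusion is in fact equality. I would not pursue that, since the statement only asks for injectivity.
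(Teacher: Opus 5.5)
Your proposal is correct and takes essentially the same route as the paper. Part (1) is a direct Sweedler-notation check of coassociativity and counitality, and part (2) observes that $\delta_3$ factors through $H_{1,\delta_1}\otimes H_{2,\delta_2}$ and invokes minimality of the Hopf image, so the inclusion is the required injective Hopf algebra map. Your extra checks, that $\delta_3$ is multiplicative and that $H_{1,\delta_1}\otimes H_{2,\delta_2}$ is a Hopf subalgebra of $H_1\otimes H_2$, fill in points the paper leaves implicit. Your remark via Proposition~\ref{prop:generator} that the inclusion is actually an equality (for nonzero $A$ and $B$) is a valid sharpening.
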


\begin{proof}
    \begin{enumerate}
        \item In order to show that $\delta_{3}$ is a right coaction of $H_{1}\otimes H_{2}$ on $A\otimes B$, it is sufficient to show the following identities:
        \begin{enumerate}
            \item $(\mathrm{id}_{A\otimes B}\otimes \Delta_{H_{1}\otimes H_{2}})\circ \delta_{3}= (\delta_{3}\otimes \mathrm{id}_{H_{1}\otimes H_{2}})\circ\delta_{3}$;
            \item $(\mathrm{id}_{A\otimes B}\otimes \epsilon_{H_{1}\otimes H_{2}})\circ \delta_{3}= \mathrm{id}_{A\otimes B}$
        \end{enumerate}
        Where, $\Delta_{H_{1}\otimes H_{2}}$ and $\epsilon_{H_{1}\otimes H_{2}}$ are coproduct and counit repectively on $H_{1}\otimes H_{2}$. Moreover, $\Delta_{H_{1}\otimes H_{2}}$ is given by $(\mathrm{id}_{H_{1}}\otimes T\otimes \mathrm{id}_{H_{2}})\circ (\Delta_{H_{1}}\otimes \Delta_{H_{2}})$, where $T$ is a flip map between $H_{1}\otimes H_{2}$ and $H_{2}\otimes H_{1}$, and $\epsilon_{H_{1}\otimes H_{2}}$ is given by $\epsilon_{H_{1}}\otimes \epsilon_{H_{2}}$.\\
        \\
        We consider $a\otimes b\in A\otimes B$, and we have the following,
        \begin{align*}
            (\mathrm{id}_{A\otimes B}\otimes \Delta_{H_{1}\otimes H_{2}})\circ \delta_{3}(a\otimes b)&= (\mathrm{id}_{A\otimes B}\otimes \Delta_{H_{1}\otimes H_{2}})(\mathrm{id}_{A}\otimes \tau\otimes \mathrm{id}_{H_{2}})\circ (\delta_{1}\otimes \delta_{2})(a\otimes b)\\
            &= (\mathrm{id}_{A\otimes B}\otimes \Delta_{H_{1}\otimes H_{2}})(\mathrm{id}_{A}\otimes \tau\otimes \mathrm{id}_{H_{2}})\circ (\delta_{1}a\otimes \delta_{2}b)\\
            &= (\mathrm{id}_{A\otimes B}\otimes \Delta_{H_{1}\otimes H_{2}})(\mathrm{id}_{A}\otimes \tau\otimes \mathrm{id}_{H_{2}})(a_{(0)}\otimes a_{(1)}\otimes b_{(0)}\otimes b_{(1)})\\
            &= (\mathrm{id}_{A\otimes B}\otimes \Delta_{H_{1}\otimes H_{2}})(a_{(0)}\otimes b_{(0)}\otimes a_{(1)}\otimes b_{(1)})\\
            &= a_{(0)}\otimes b_{(0)}\otimes \Delta_{H_{1}\otimes H_{2}}(a_{(1)}\otimes b_{(1)})\\
            &= a_{(0)}\otimes b_{(0)}\otimes a_{(1)}\otimes b_{(1)}\otimes a_{(2)}\otimes b_{(2)}
        \end{align*}
        Next, \begin{align*}
            (\delta_{3}\otimes \mathrm{id}_{H_{1}\otimes H_{2}})\circ \delta_{3}(a\otimes b)&=  (\delta_{3}\otimes \mathrm{id}_{H_{1}\otimes H_{2}})(\mathrm{id}_{A}\otimes \tau\otimes \mathrm{id}_{H_{2}})\circ (\delta_{1}\otimes \delta_{2})(a\otimes b)\\
            &= (\delta_{3}\otimes \mathrm{id}_{H_{1}\otimes H_{2}})(a_{(0)}\otimes b_{(0)}\otimes a_{(1)}\otimes b_{(1)})\\
            &= \delta_{3}(a_{(0)}\otimes b_{(0)})\otimes a_{(1)}\otimes b_{(1)}\\
            &= a_{(0)}\otimes b_{(0)}\otimes a_{(1)}\otimes b_{(1)}\otimes a_{(2)}\otimes b_{(2)}
        \end{align*}
    
    Therefore, we have the identity $(a)$ i.e. $(\mathrm{id}_{A\otimes B}\otimes \Delta_{H_{1}\otimes H_{2}})\circ \delta_{3}= (\delta_{3}\otimes \mathrm{id}_{H_{1}\otimes H_{2}})\circ\delta_{3}$.
    Next, to show the identity $(b)$, we have the following
    \begin{align*}
        (\mathrm{id}_{A\otimes B}\otimes \epsilon_{H_{1}\otimes H_{2}})\circ \delta_{3}(a\otimes b)&= (\mathrm{id}_{A\otimes B}\otimes \epsilon_{H_{1}\otimes H_{2}})(\mathrm{id}_{A}\otimes \tau\otimes \mathrm{id}_{H_{2}})\circ (\delta_{1}\otimes \delta_{2})(a\otimes b)\\
        &= \mathrm{id}_{A\otimes B}\otimes \epsilon_{H_{1}\otimes H_{2}})(a_{(0)}\otimes b_{(0)}\otimes a_{(1)}\otimes b_{(1)})\\
        &= a_{(0)}\otimes b_{(0)}\otimes \epsilon_{H_{1}\otimes H_{2}}(a_{(1)}\otimes b_{(1)})\\
        &= a_{(0)}\otimes b_{(0)}\otimes \epsilon_{H_{1}}(a_{(1)})\otimes \epsilon_{H_{2}}(b_{(1)})\\
        &= a_{(0)}\otimes b_{(0)}\\
        &= a\otimes b
    \end{align*}
    \item 
We have $(H_{1,\delta_{1}}, \iota_{H_{1,\delta_{1}}}, \delta_{1,\mathrm{im}})$ and $(H_{2,\delta_{2}}, \iota_{H_{2,\delta_{2}}}, \delta_{2,\mathrm{im}})$ be Hopf images of the right coactions $\delta_{1}$ and $\delta_{2}$ respectively. Therefore, we have the following diagrams

\begin{tikzcd}
A \arrow[r, "\delta_{1}"] \arrow[rd, "{\delta_{1,\mathrm{im}}}"'] & A\otimes H_{1}                                                                               &  & B \arrow[rd, "{\delta_{2,\mathrm{im}}}"'] \arrow[r, "\delta_{2}"]                                                                                                     & B\otimes H_{2}                                                                              \\
                                                                  & {A\otimes H_{1,\delta_{1}}} \arrow[u, "{\mathrm{id}_{A}\otimes \iota_{H_{1,\delta_{1}}} }"'] &  &                                                                                                                                                                       & {B\otimes H_{2,\delta_{2}}} \arrow[u, "{\mathrm{id}_{B}\otimes \iota_{H_{2,\delta_{2}}}}"'] \\
                                                                  & A\otimes B \arrow[rr, "\delta_{3}"] \arrow[rrd, "\overline{\delta_{3}}"']                    &  & A\otimes B\otimes H_{1}\otimes H_{2}                                                                                                                                  &                                                                                             \\
                                                                  &                                                                                              &  & {A\otimes B\otimes H_{1,\delta_{1}}\otimes H_{2,\delta_{2}}} \arrow[u, "{\mathrm{id}_{A\otimes B}\otimes \iota_{H_{1,\delta_{1}}}\otimes \iota_{H_{2,\delta_{2}}}}"'] &                                                                                            
\end{tikzcd}
Here, $\overline{\delta_{3}}:= (\mathrm{id}_{A}\otimes \tau_{\mathrm{im}}\otimes \mathrm{id}_{H_{2}, \delta_{2}})\circ (\delta_{1,\mathrm{im}}\otimes \delta_{2,\mathrm{im}})$ and $\tau_{\mathrm{im}}: H_{1,\delta_{1}}\otimes B\rightarrow B\otimes H_{1, \delta_{1}}$ is a flip map. Since, by $(1)$, $\delta_{3}$ is a right coaction of $H_{1}\otimes H_{2}$, so, $(H_{1, \delta_{1}}\otimes H_{2, \delta_{2}}, \iota_{H_{1,\delta_{1}}}\otimes \iota_{H_{2,\delta_{2}}}, \overline{\delta_{3}})$ is a factorization of the coaction $\delta_{3}$. Therefore, by minimality of the Hopf image of $H_{1}\otimes H_{2}$, we have $(H_{1}\otimes H_{2})_{\delta_{3}}\subseteq H_{1,\delta_{1}}\otimes H_{2,\delta_{2}}$. Hence, there exists an injective Hopf algebra morphism from $(H_{1}\otimes H_{2})_{\delta_{3}}$ to $H_{1,\delta_{1}}\otimes H_{2,\delta_{2}}$.
    \end{enumerate}
\end{proof}

\section{Examples of Hopf image and inner-faithful coactions}
In this section, we give examples of inner-faithful coactions. One of the concrete example is for the case of the quantized coordinate algebra of a complex semisimple Lie group. We also give an example where a non-trivial Hopf image of a coaction can be obtained from Nichols algebras.
\medskip

\subsection{Examples from coproduct of a Hopf algebra}

We begin this section with the simplest form of example, and therefore we consider the coproduct of a Hopf algebra.

\begin{proposition}\label{prop:regular-inner-faithful}
Let $H$ be a Hopf algebra.
Then the right coaction given by the coproduct
\[
\delta := \Delta : H \longrightarrow H \otimes H
\]
is inner--faithful.
\end{proposition}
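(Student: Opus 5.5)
The plan is to use the coefficient description of the Hopf image from Proposition~\ref{prop:generator} together with the counit axiom. By that proposition, $H_\Delta$ is the Hopf subalgebra of $H$ generated by
\[
\mathcal{C}_\Delta=\operatorname{span}\{(\omega\otimes\mathrm{id})\Delta(h)\mid h\in H,\ \omega\in H^*\}.
\]
Since $\mathcal{C}_\Delta\subseteq H_\Delta\subseteq H$, it suffices to show that $\mathcal{C}_\Delta=H$. Then $H_\Delta=H$, and $\Delta$ is inner faithful by Definition~\ref{def:innerfaithful}.

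The key step is to take $\omega=\varepsilon\in H^*$. The counit axiom gives
\[
(\varepsilon\otimes\mathrm{id})\Delta(h)=\varepsilon(h_{(1)})h_{(2)}=h
\]
for every $h\in H$. Hence every element of $H$ is a coefficient of $\Delta$, and $\mathcal{C}_\Delta=H$.

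Before this, I will check that $\Delta$ is a right coaction of $H$ on the algebra $H$ in the sense of the definition in the Preliminaries. It is an algebra morphism because $H$ is a bialgebra. The coaction identities $(\Delta\otimes\mathrm{id})\Delta=(\mathrm{id}\otimes\Delta)\Delta$ and $(\mathrm{id}\otimes\varepsilon)\Delta=\mathrm{id}$ are coassociativity and the right counit axiom.

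As a cross-check that avoids Proposition~\ref{prop:generator}, one can argue directly. If $L\subseteq H$ is a Hopf subalgebra with $\Delta(H)\subseteq H\otimes L$, apply $\varepsilon\otimes\mathrm{id}$, which maps $H\otimes L$ into $L$, to get $h\in L$ for all $h$. So $L=H$, and condition (2) of Proposition~\ref{prop:innerfaithful} holds.

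I do not expect a genuine obstacle. The only point needing care is that $\varepsilon\otimes\mathrm{id}$ sends $H\otimes L$ into $L$, which is immediate because it acts only on the first tensor factor.
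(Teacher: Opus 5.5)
Your proposal is correct and takes essentially the same approach as the paper. The paper's proof is exactly your ``cross-check'': it applies $\varepsilon\otimes\mathrm{id}$ to $\Delta(H)\subseteq H\otimes K$ for a Hopf subalgebra $K$ to get $h\in K$ for all $h$, and your main route via Proposition~\ref{prop:generator} rests on the same counit identity $(\varepsilon\otimes\mathrm{id})\Delta(h)=h$, just packaged through the coefficient description.
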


\begin{proof}
Let $K \subseteq H$ be a Hopf subalgebra such that the coaction $\Delta$ factors
through $K$, i.e.
\[
\Delta(H) \subseteq H \otimes K.
\]
Apply $(\varepsilon \otimes \mathrm{id})$ to $\Delta(h)$ for $h \in H$.
Using the counit axiom $(\varepsilon \otimes \mathrm{id})\circ \Delta = \mathrm{id}$,
we obtain
\[
h = (\varepsilon \otimes \mathrm{id})\Delta(h) \in K.
\]
Thus $H \subseteq K$, and hence $K = H$.
Therefore no proper Hopf subalgebra of $H$ can factor the coaction, and $\Delta$
is inner--faithful.
\end{proof}

\begin{remark}
    We consider $L$ be a proper Hopf subalgebra of $H$ such that $\iota: L \hookrightarrow H$ be an inclusion. Then $\delta':= (\mathrm{id}_{L}\otimes \iota)\circ \Delta|_{L}: L \rightarrow L\otimes H$ is a right coaction of $H$ on $L$. In this case, the Hopf-image of $\delta'$ is given by $H_{\delta'}= L\subset H$. This gives us a simplest example where the Hopf image of a coaction is non-trivial.
\end{remark}

\subsection{Examples from coaction induced from quantized coordinate algebra}
Let $G$ be a complex semisimple Lie group and let $L_S$ be a Levi subgroup of $G$. Consider the Hopf algebras $A=\mathcal O_q(G)$ and $H=\mathcal O_q(L_S)$. We have cannonical Hopf algebra surjection map $\pi:\mathcal O_q(G)\twoheadrightarrow \mathcal O_q(L_S)$ (for example, one can see \cite{stokman1999quantized, zwicknagl2009r}), and consider the
right coaction
\[
\delta := (\mathrm{id}\otimes\pi)\circ\Delta_{\mathcal O_q(G)}:
\mathcal O_q(G)\longrightarrow
\mathcal O_q(G)\otimes\mathcal O_q(L_S).
\]

\begin{proposition}\label{prop:levi-inner-faithful}
Let $G$ be a complex semisimple Lie group and $L_S\subset G$ a Levi subgroup.
Then the
right coaction
\[
\delta :
\mathcal O_q(G)\longrightarrow
\mathcal O_q(G)\otimes\mathcal O_q(L_S).
\]
is inner--faithful. 
\end{proposition}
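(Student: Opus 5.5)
The plan is to mimic the proof of Proposition~\ref{prop:regular-inner-faithful}, using the counit of $\mathcal O_q(G)$ and the surjectivity of $\pi$. Let $K\subseteq \mathcal O_q(L_S)$ be a Hopf subalgebra with $\delta(\mathcal O_q(G))\subseteq \mathcal O_q(G)\otimes K$. By Proposition~\ref{prop:innerfaithful}, it suffices to show that $K=\mathcal O_q(L_S)$. Equivalently, by Proposition~\ref{prop:generator}, it suffices to show that the coefficient space $\mathcal C_\delta$ already exhausts $\mathcal O_q(L_S)$.

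For $a\in\mathcal O_q(G)$ we apply $\varepsilon_{\mathcal O_q(G)}\otimes\mathrm{id}$ to $\delta(a)=a_{(1)}\otimes\pi(a_{(2)})$. Since $\varepsilon$ is a linear functional on $\mathcal O_q(G)$, i.e.\ an element $\omega=\varepsilon\in\mathcal O_q(G)^*$, the result lies in $\mathcal C_\delta$. The counit axiom and linearity of $\pi$ then give
\[
(\varepsilon\otimes\mathrm{id})\delta(a)=\pi\bigl(\varepsilon(a_{(1)})a_{(2)}\bigr)=\pi(a).
\]
Since $\delta(a)\in\mathcal O_q(G)\otimes K$ and $(\varepsilon\otimes \mathrm{id})$ maps $\mathcal O_q(G)\otimes K$ into $K$, we conclude that $\pi(a)\in K$ for every $a$. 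Hence $\pi(\mathcal O_q(G))\subseteq K$.

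The final step uses the surjectivity of the canonical restriction map $\pi:\mathcal O_q(G)\twoheadrightarrow\mathcal O_q(L_S)$, quoted from the literature. It gives $\mathcal O_q(L_S)=\pi(\mathcal O_q(G))\subseteq K$, so $K=\mathcal O_q(L_S)$ and $\delta$ is inner faithful.

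The main obstacle is not the algebra, which is a two-line computation, but justifying that $\pi$ is genuinely surjective. This is the one place where the specific structure of quantized coordinate rings enters. If needed, I would verify it by noting that $\mathcal O_q(L_S)$ is by definition generated by matrix coefficients of finite-dimensional type-one $U_q(\mathfrak l_S)$-modules. Each such module occurs as a $U_q(\mathfrak l_S)$-submodule (indeed a direct summand) of the restriction of a $U_q(\mathfrak g)$-module. Hence each of its matrix coefficients is the image under $\pi$ of a matrix coefficient of $\mathcal O_q(G)$, and $\pi$ is an algebra map, so its image is all of $\mathcal O_q(L_S)$.

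The argument also shows more generally that $(\mathrm{id}\otimes\pi)\circ\Delta$ is inner faithful for any surjective Hopf map $\pi$.
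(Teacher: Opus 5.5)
Your proposal is correct and follows the paper's proof exactly. You take a Hopf subalgebra $K$ through which $\delta$ factors, apply $(\varepsilon\otimes\mathrm{id})$ to obtain $\pi(\mathcal O_q(G))\subseteq K$, and conclude $K=\mathcal O_q(L_S)$ from the surjectivity of $\pi$; the only difference is that the paper simply quotes this surjectivity from the literature, whereas you also sketch why it holds.
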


\begin{proof}
Let $K\subseteq \mathcal O_q(L_S)$ be a Hopf subalgebra such that the coaction
$\delta$ factors through $K$, that is,
\[
\delta(\mathcal O_q(G))\subseteq
\mathcal O_q(G)\otimes K.
\]
Applying $(\varepsilon\otimes\mathrm{id})$ to $\delta$, we obtain
\[
\pi(\mathcal O_q(G))\subseteq K.
\]
Since $\pi:\mathcal O_q(G)\twoheadrightarrow \mathcal O_q(L_S)$ is surjective,
it follows that $K=\mathcal O_q(L_S)$.
Hence no proper Hopf subalgebra of $\mathcal O_q(L_S)$ can factor the coaction,
and $\delta$ is inner--faithful.
\end{proof}

\subsection{Examples arising from Nichols algebras}

Let \(H\) be a Hopf algebra and let $V\in \mathcal{YD}^{H}_{H}$ be a right-right Yetter--Drinfeld module. Denote by $\mathcal{B}(V)$ the Nichols algebra of \(V\) (one can refer to \cite{heckenberger2020hopf,majid2002quantum} for literature on the Yetter--Drinfeld modules and Nichols algebras). The right \(H\)-coaction on \(V\) extends
to a right \(H\)-coaction on \(\mathcal{B}(V)\),
\[
\rho_{\mathcal{B}(V)}:
\mathcal{B}(V)\longrightarrow
\mathcal{B}(V)\otimes H,
\]
which is an algebra homomorphism. Since $\mathcal{B}(V)$ is generated as an algebra by $V$, one expects
the Hopf image of this coaction to be determined by the $H$-comodule
structure of $V$. We make this precise in the following proposition.
We consider the smallest Hopf subalgebra
\(K_V\subseteq H\) such that
\[
\rho_V(V)\subseteq V\otimes K_V,
\]
where
\[
\rho_V:V\longrightarrow V\otimes H
\]
denotes the given Yetter--Drinfeld coaction. The following results shows
that \(K_V\) is precisely the Hopf image of the induced coaction on the
Nichols algebra. In order to do so, we shall first prove a general result relevant to this.

\begin{proposition}\label{prop:Nichols}
Let \(H\) be a Hopf algebra and let \(A\) be a right \(H\)-comodule
algebra with coaction
\[
\rho_A:A\longrightarrow A\otimes H.
\]
Let \(V\subseteq A\) be a right \(H\)-subcomodule which generates
\(A\) as an algebra. Thus, writing
\[
\rho_V:=\rho_A|_V,
\]
we have
\[
\rho_V:V\longrightarrow V\otimes H.
\]

Let \(K_V\) denote the smallest Hopf subalgebra of \(H\) such that
\[
\rho_V(V)\subseteq V\otimes K_V.
\]
Then
\[
H_{\rho_A}=K_V.
\]
In particular, the Hopf image of the coaction on \(A\) is completely
determined by the \(H\)-comodule structure of the algebra generators
\(V\).
\end{proposition}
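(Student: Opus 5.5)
We prove the two inclusions $K_V\subseteq H_{\rho_A}$ and $H_{\rho_A}\subseteq K_V$ separately, using the characterization of the Hopf image in Theorem~\ref{thm:Hopfimage} as the smallest Hopf subalgebra $L\subseteq H$ with $\rho_A(A)\subseteq A\otimes L$.

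For $K_V\subseteq H_{\rho_A}$, we first note that $\rho_A(V)\subseteq A\otimes H_{\rho_A}$. Since $V$ is a subcomodule, we also have $\rho_A(V)\subseteq V\otimes H$. Hence
\[
\rho_V(V)\subseteq (V\otimes H)\cap(A\otimes H_{\rho_A})=V\otimes H_{\rho_A}.
\]
The equality is the standard linear-algebra fact that $(X\otimes H)\cap(A\otimes Y)=X\otimes Y$ for subspaces $X\subseteq A$ and $Y\subseteq H$; one sees it by choosing a basis of $A$ extending a basis of $X$. Thus $H_{\rho_A}$ is one of the Hopf subalgebras in the intersection defining $K_V$, and so $K_V\subseteq H_{\rho_A}$. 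Alternatively, one can compare coefficient spaces as in Proposition~\ref{prop:generator}: every $(\omega\otimes\mathrm{id})\rho_V(v)$ with $\omega\in V^*$ equals $(\tilde\omega\otimes\mathrm{id})\rho_A(v)$ for any extension $\tilde\omega\in A^*$ of $\omega$.

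For $H_{\rho_A}\subseteq K_V$, consider
\[
A':=\{a\in A:\rho_A(a)\in A\otimes K_V\}.
\]
We show that $A'$ is a subalgebra. It contains $1$, because $\rho_A(1)=1\otimes 1$ and $1\in K_V$. It is closed under products, because $\rho_A$ is an algebra map and $A\otimes K_V$ is a subalgebra of $A\otimes H$, as $K_V$ is a subalgebra. By the definition of $K_V$ we have $V\subseteq A'$. Since $V$ generates $A$ as an algebra, it follows that $A'=A$, i.e.\ $\rho_A(A)\subseteq A\otimes K_V$. Then $K_V$ gives a factorization of $\rho_A$, and Lemma~\ref{lem:minimality} yields $H_{\rho_A}\subseteq K_V$.

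No step is a real obstacle. The only point needing care is the tensor-intersection identity in the first inclusion, which holds over a field. Note that the second inclusion uses only that $K_V$ is a subalgebra; its coalgebra and antipode structure is needed only so that $K_V$ is admissible in the minimality statement.
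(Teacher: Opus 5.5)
Your proof is correct and follows the same two-inclusion argument as the paper. For $H_{\rho_A}\subseteq K_V$, you use that $\rho_A^{-1}(A\otimes K_V)$ is a subalgebra containing $V$, where the paper checks the same thing on monomials $v_1\cdots v_n$. For $K_V\subseteq H_{\rho_A}$, you restrict $\rho_A(A)\subseteq A\otimes H_{\rho_A}$ to $V$, and your explicit use of $(V\otimes H)\cap(A\otimes H_{\rho_A})=V\otimes H_{\rho_A}$ justifies a step the paper passes over without comment.
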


\begin{proof}
We first show that
\[
H_{\rho_A}\subseteq K_V.
\]

By definition of \(K_V\),
\[
\rho_V(V)\subseteq V\otimes K_V.
\]
Since \(K_V\) is a subalgebra of \(H\), the tensor product
\(A\otimes K_V\) is a subalgebra of \(A\otimes H\). Moreover,
\(\rho_A\) is an algebra homomorphism and \(A\) is generated as an
algebra by \(V\). Hence
\[
\rho_A(A)\subseteq A\otimes K_V.
\]
Indeed, if
\[
a=v_1v_2\cdots v_n,
\qquad v_i\in V,
\]
then
\[
\rho_A(a)
=
\rho_A(v_1)\rho_A(v_2)\cdots\rho_A(v_n)
=
\rho_V(v_1)\rho_V(v_2)\cdots\rho_V(v_n)
\in A\otimes K_V.
\]
Thus \(\rho_A\) factors through the Hopf subalgebra \(K_V\). By the
minimality of the Hopf image \(H_{\rho_A}\), it follows that
\[
H_{\rho_A}\subseteq K_V.
\]

Conversely, let
\[
L:=H_{\rho_A}.
\]
By the definition of the Hopf image,
\[
\rho_A(A)\subseteq A\otimes L.
\]
Since \(V\subseteq A\) and \(\rho_V=\rho_A|_V\), we obtain
\[
\rho_V(V)\subseteq V\otimes L.
\]
Thus \(L\) is a Hopf subalgebra of \(H\) through which the coaction
\(\rho_V\) factors. By the defining minimality of \(K_V\), we therefore
have
\[
K_V\subseteq L=H_{\rho_A}.
\]

Combining the two inclusions gives
\[
H_{\rho_A}=K_V.
\]
\end{proof}

\begin{corollary} \label{cor:Nichols}
Let \(H\) be a Hopf algebra and let $V\in\mathcal{YD}^{H}_{H}$ be a right-right Yetter--Drinfeld module. Let $\mathcal{B}(V)$ be the Nichols algebra of \(V\), equipped with its canonical right
\(H\)-coaction
\[
\rho_{\mathcal{B}(V)}:
\mathcal{B}(V)\longrightarrow
\mathcal{B}(V)\otimes H
\]
extending the Yetter--Drinfeld coaction
\[
\rho_V:V\longrightarrow V\otimes H.
\]
Let \(K_V\) be the smallest Hopf subalgebra of \(H\) satisfying
\[
\rho_V(V)\subseteq V\otimes K_V.
\]
Then
\[
H_{\rho_{\mathcal{B}(V)}}=K_V.
\]
\end{corollary}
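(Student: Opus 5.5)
The plan is to obtain Corollary~\ref{cor:Nichols} as a direct specialization of Proposition~\ref{prop:Nichols}, with $A=\mathcal{B}(V)$ and $\rho_A=\rho_{\mathcal{B}(V)}$. This means checking the three hypotheses of that proposition in the Nichols algebra setting:
\begin{enumerate}
\item $\mathcal{B}(V)$ is a right $H$-comodule algebra under $\rho_{\mathcal{B}(V)}$;
\item $V$, embedded as the degree-one component $\mathcal{B}^1(V)=V$, is an $H$-subcomodule of $\mathcal{B}(V)$;
\item $V$ generates $\mathcal{B}(V)$ as an algebra.
\end{enumerate}
Once these hold, Proposition~\ref{prop:Nichols} gives $H_{\rho_{\mathcal{B}(V)}}=K_V$ with no further work.

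For the first point, I would recall the construction $\mathcal{B}(V)=T(V)/I(V)$. Here $T(V)$ carries the diagonal (codiagonal) right $H$-coaction
\[
v_1\cdots v_n\mapsto v_{1(0)}\cdots v_{n(0)}\otimes v_{1(1)}\cdots v_{n(1)}.
\]
This coaction is an algebra map, since $T(V)$ is an algebra in the braided category $\mathcal{YD}^H_H$. The ideal $I(V)$ is the largest coideal in degrees $\ge 2$, and it is an $H$-subcomodule because it is a Yetter--Drinfeld submodule. Hence the coaction descends to an algebra map $\rho_{\mathcal{B}(V)}$, which is the canonical coaction referred to in the statement.

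The second and third points come from the grading. The ideal $I(V)$ lives in degrees $\ge 2$, so $V\to\mathcal{B}(V)$ is injective. The restriction of $\rho_{\mathcal{B}(V)}$ to degree one is exactly $\rho_V$, with image in $V\otimes H$. Finally, $\mathcal{B}(V)$ is a quotient of $T(V)$, so it is generated by $V$.

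The only real obstacle is that $H$-colinearity of $I(V)$ is a standard fact rather than something proved here. I would cite \cite{heckenberger2020hopf} for it. Note that the statement itself already posits $\rho_{\mathcal{B}(V)}$ as an algebra map extending $\rho_V$, so only the generation property is genuinely being used.
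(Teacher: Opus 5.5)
Your proposal is correct and follows the paper's proof. Both specialize Proposition~\ref{prop:Nichols} to $A=\mathcal{B}(V)$, after observing that the degree-one part $V$ is an $H$-subcomodule on which $\rho_{\mathcal{B}(V)}$ restricts to $\rho_V$, and that $V$ generates $\mathcal{B}(V)$ as an algebra; your extra justification via $T(V)/I(V)$ (colinearity of $I(V)$, and injectivity of $V\to\mathcal{B}(V)$ since $I(V)$ lies in degrees $\ge 2$) just fills in standard facts the paper takes for granted.
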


\begin{proof}
By definition, the Nichols algebra \(\mathcal{B}(V)\) is generated as
an algebra by its degree-one component \(V\). Moreover, \(V\) is a
right \(H\)-subcomodule of \(\mathcal{B}(V)\), and the canonical
\(H\)-coaction on \(\mathcal{B}(V)\) restricts to the given
Yetter--Drinfeld coaction on \(V\). Therefore, the Proposition~\ref{prop:Nichols} applies with $A=\mathcal{B}(V)$. Consequently, $H_{\rho_{\mathcal{B}(V)}}=K_V$.
\end{proof}

\begin{remark}
The preceding Corollary~\ref{cor:Nichols} provides a criterion for obtaining proper
non-trivial Hopf images from Nichols algebras. Namely, if \(K_V\) is the
smallest Hopf subalgebra of \(H\) satisfying
\[
\rho_V(V)\subseteq V\otimes K_V,
\]
then
\[
H_{\rho_{\mathcal{B}(V)}}=K_V.
\]
Consequently, if
\[
\mathbb{C}1\subsetneq K_V\subsetneq H,
\]
then the canonical \(H\)-coaction on \(\mathcal{B}(V)\) has a proper
non-trivial Hopf image:
\[
\mathbb{C}1
\subsetneq
H_{\rho_{\mathcal{B}(V)}}
=
K_V
\subsetneq
H.
\]
Thus, the construction of proper non-trivial Hopf images on Nichols
algebras reduces to the problem of finding Yetter--Drinfeld modules
\(V\in\mathcal{YD}^{H}_{H}\) such that 
\(K_V\) is a proper non-trivial Hopf subalgebra of \(H\).
\end{remark}

\paragraph{A non-trivial example of Hopf image arising from a bosonization}

We now give an example illustrating the preceding Corollary~\ref{cor:Nichols}.
The ambient Hopf algebra in this example is a standard finite-dimensional
bosonization; our point is to use a suitable Yetter--Drinfeld module over
this Hopf algebra to obtain a coaction on a Nichols algebra with a proper
and non-trivial Hopf image.

Let
\[
K=\mathbb{C}(\mathbb{Z}_2\times\mathbb{Z}_2)
=
\mathbb{C}\langle g,h\mid
g^2=h^2=1,\ gh=hg\rangle.
\]
Let $W=\mathbb{C}x$ be the one-dimensional Yetter--Drinfeld module over
$K$ determined by
\[
\rho_W(x)=x\otimes h,
\qquad
x\triangleleft g=x,
\qquad
x\triangleleft h=-x.
\]
The corresponding braiding is given by
\[
c(x\otimes x)=-x\otimes x,
\]
and hence
\[
\mathcal{B}(W)=\mathbb{C}[x]/(x^2).
\]
Consider the bosonization
\[
H=\mathcal{B}(W)\# K.
\]
We identify $K$ with its canonical Hopf subalgebra of $H$ and write
$x$ for $x\#1$. Thus $H$ is generated by $x,g,h$ with relations
\[
x^2=0,\qquad
g^2=h^2=1,\qquad
gh=hg,\qquad
gx=xg,\qquad
hx=-xh.
\]
In particular, $g$ is a central group-like element of $H$. The resulting Hopf algebra (bosonization) is eight-dimensional and is a standard
pointed Hopf algebra. In fact, after the change of generators
\[
g'=h,\qquad h'=gh,
\]
its presentation agrees with the Hopf algebra \(A_{2,2}\) in the
classification of pointed Hopf algebras of dimension eight
\cite[\S~4]{beattie2013classifying}.

We now consider the following right-right Yetter--Drinfeld module over
$H$. Let $V=\mathbb{C}v$ and define
\[
v\triangleleft g=-v,
\qquad
v\triangleleft h=v,
\qquad
v\triangleleft x=0,
\]
together with the right coaction
\[
\rho_V(v)=v\otimes g.
\]
The Yetter--Drinfeld compatibility is immediate on the group-like
generators $g,h$, while for $x$ it follows from
\[
\Delta(x)=x\otimes1+h\otimes x,\qquad
S(x)=-hx,\qquad
v\triangleleft x=0,\qquad x^2=0.
\]
Hence $V\in\mathcal{YD}^{H}_{H}$.

The braiding on $V$ is
\[
c(v\otimes v)
=
v_{(0)}\otimes(v\triangleleft v_{(1)})
=
v\otimes(v\triangleleft g)
=
-v\otimes v.
\]
Consequently,
\[
\mathcal{B}(V)
=
\mathbb{C}[v]/(v^2)
\]
is the exterior algebra on $V$.

We next determine the smallest Hopf subalgebra through which the
coaction on $V$ factors. Since
\[
\rho_V(v)=v\otimes g,
\]
the coefficient Hopf subalgebra is
\[
K_V=\mathbb{C}\langle g\rangle
\cong\mathbb{C}\mathbb{Z}_2.
\]
Indeed, $g$ occurs as a coefficient of $\rho_V(v)$, while
\[
\rho_V(V)\subseteq V\otimes\mathbb{C}\langle g\rangle.
\]
Therefore
\[
\mathbb{C}1
\subsetneq
K_V
=
\mathbb{C}\langle g\rangle
\subsetneq
H.
\]
The preceding Corollary~\ref{cor:Nichols} now yields
\[
\boxed{
H_{\rho_{\mathcal{B}(V)}}
=
K_V
=
\mathbb{C}\langle g\rangle
\subsetneq H.
}
\]

Thus, although the ambient Hopf algebra $H$ is a standard
non-commutative and non-cocommutative bosonization, the canonical
coaction on the exterior Nichols algebra $\mathcal{B}(V)$ detects only
the proper non-trivial Hopf subalgebra
\[
H_{\rho_{\mathcal{B}(V)}}
=
\mathbb{C}\langle g\rangle
\cong\mathbb{C}\mathbb{Z}_2.
\]
This gives, in particular, an explicit example of a proper non-trivial
Hopf image arising from the Nichols-algebra construction.


\section*{Competing Interests}

The author declares that there are no competing interests.

\bibliographystyle{plain}
\bibliography{name}

\bigskip

\noindent
\textsc{Arnab Bhattacharjee}\\
Mathematical Institute\\
Charles University\\
Prague, Czech Republic\\
\textit{E-mail address:} \texttt{bhattacharjeea@karlin.mff.cuni.cz}

\end{document}